\documentclass[12pt,a4paper]{article}
\usepackage[utf8]{inputenc}
\usepackage{amsfonts}
\usepackage{amsthm}
\usepackage{amsmath}
\usepackage{amssymb}
\usepackage{todonotes}
\usepackage{graphicx}
\newtheorem{trm}{Theorem}

\newtheorem{lem}[trm]{Lemma}
\newtheorem{cla}[trm]{Claim}

\begin{document}

\title{Maximal edge colorings of graphs}

\author{Sebastian Babiński\thanks{Faculty of Mathematics and Computer Science, Jagiellonian University, {\L}ojasie\-wicza~6, 30-348 Krak\'{o}w, Poland. E-mail: {\tt Sebastian.Babinski@im.uj.edu.pl}.}\and
Andrzej Grzesik\thanks{Faculty of Mathematics and Computer Science, Jagiellonian University, {\L}ojasie\-wicza~6, 30-348 Krak\'{o}w, Poland. E-mail: {\tt Andrzej.Grzesik@uj.edu.pl}. The work of this author was supported by the National Science Centre grant 2016/21/D/ST1/00998.}}

\date{}

\maketitle

\begin{abstract}
For a graph $G$ of order $n$ a maximal edge coloring is a proper edge coloring with $\chi'(K_n)$ colors such that adding any edge to $G$ in any color makes it improper. 
Meszka and Tyniec proved that for some values of the number of edges there are no graphs with a maximal edge coloring, while for some other values, they provided constructions of such graphs. 
However, for many values of the number of edges determining whether there exists any graph with a maximal edge coloring remained open. 
We give a complete solution of this problem.
\end{abstract}

\section{Introduction}

The problems associated with maximality of a family of some objects are widely considered in Combinatorics. 
When we have a subset of the space of objects that satisfies some fixed properties we say it is \textit{maximal} when adding to this set any new element from the space results in violating the conditions. 
One of the best-known problem in this field is related with maximal partial latin squares. A \textit{partial latin square} is an array with $n$ rows and $n$ columns such that each of its entries is filled with a number from $1$ to $n$ or left empty and each number can appear in every column and every row at most once. For an exhaustive survey on latin squares see \cite{DK}. We say that a partial latin square is \textit{maximal} when filling any empty cell with any number from $1$ to $n$ results in repetition of this number in the row or in the column. The problem is to determine for which values of $m$ there exists a maximal partial latin square with exactly $m$ cells filled. This problem was partially solved by Horák and Rosa in \cite{HR}, but there are still many values of $m$, for which the problem is open. 


Many problems of a similar spirit have already been investigated. In particular, Cousins and Wallis in \cite{CW} partially answered the problem of determinig the possible order of a maximal set of pairwise edge-disjoint 1-factors of $K_{2n}$, but the complete answer was given by Reed and Wallis in \cite{RW} more than 15 years later. An analogous problem of determining the possible order of a maximal set of edge-disjoint 2-factors was solved by Hoffman, Rodger and Rosa in~\cite{HRR}. They also proved that the answer for this problem is the same if we consider not any 2-factors, but just Hamiltonian cycles. A similar problem for triangle-factors of $K_{3n}$ was considered by Rees, Rosa and Wallis in \cite{RRW}. A generalization of the latin squares problem in higher dimension -- for partial latin cubes -- was investigated by Britz, Cavenagh and Sørense in~\cite{BCS}. These are only a few examples of the huge family of similar problems. For an extended survey on this topic, see \cite{R15}.
 
Comming back to the problem of the maximal partial latin squares, it is easy to notice that the task of determining how many non-empty cells a maximal partial latin square can have is equivalent to the problem of determining how many edges can be in the maximal partial edge coloring of a complete bipartite graph $K_{n,n}$ with $n = \chi'(K_{n,n})$ colors. As a natural consequence of the above problem, and motivated by the developement of the theory of \hbox{on-line} maximal edge coloring (see \cite{BN}, \cite{FN}), Meszka and Tyniec in~\cite{MT} analyzed maximal partial colorings of a complete graph $K_n$ with $\chi'(K_n)$ colors. Notice that the chromatic index of a complete graph on $n \ge2$ vertices is equal to $n-1$ when $n$ is even, and is equal to $n$ when $n$ is odd. 

We say that a fixed graph $G$ of order $n$ has a \textit{maximal edge coloring} if there exists a proper edge coloring of $G$ with $\chi'(K_n)$ colors such that adding to the graph $G$ any additional edge in any color will make the coloring improper. Obviously, not every graph has maximal edge coloring. That is why for any positive integer $n$ we define a \textit{spectrum} (denoted by $MEC(n)$) as the set of all values $m$ of the number of edges for which there exists a graph of order $n$ and size $m$ that has a maximal edge coloring:
\begin{center}
$MEC(n) = \{m \in \mathbb{N}:$ there exists a graph $G$ such that $|V(G)| = n$\\
and $|E(G)| = m$ which has a maximal edge coloring $\}$.
\end{center}

Meszka and Tyniec in \cite{MT} proved the following two theorems. 

\begin{trm}
Let $n$ be an even number, $n > 10$.
\begin{itemize}
\item If $\frac{1}{4}n^2 \le m \le \frac{1}{2}n^2 - \frac{1}{2}n = \binom{n}{2}$, $m \ne \binom{n}{2}-1$, and for $n \equiv 2\ (\textrm{mod } 4)$, $m \ne \frac{1}{4}n^2 + 1$, then $m \in MEC(n)$.
\item If $0 \le m \le \frac{1}{4}n^2 - \frac{3}{8}n$ or $m = \binom{n}{2} - 1$, then $m \notin MEC(n)$.
\end{itemize}
\end{trm}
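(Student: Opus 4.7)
My plan is to treat the non-existence and construction claims separately.

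For the non-existence bound $m \le \frac{1}{4}n^2 - \frac{3}{8}n$, I will exploit a clique structure forced on each color class. The key observation is that in any maximal edge coloring of $G$ using $n-1$ colors, the set $V_c$ of vertices not incident to an edge of color $c$ must induce a clique in $G$; otherwise any non-edge $uv$ with $u, v \in V_c$ could be added in color $c$, contradicting maximality. Taking a vertex $v$ of minimum degree $\delta$, each of the $n - 1 - \delta$ colors missing at $v$ determines a clique inside $N[v]$ containing $v$, so no non-neighbor $u$ of $v$ can miss any color that $v$ misses. Consequently $d(u) \ge n - 1 - \delta$ for every such $u$, and summing degrees over $v$, its $\delta$ neighbors, and its $n - 1 - \delta$ non-neighbors gives
\[
2m \ge \delta + \delta^2 + (n - 1 - \delta)^2.
\]
Minimizing this quadratic in the integer parameter $\delta$ yields the asserted lower bound.

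For the case $m = \binom{n}{2} - 1$, write $G = K_n - uv$. Each color class of a proper $(n-1)$-edge coloring of $G$ is a matching of at most $n/2$ edges and $|E(G)| = (n-1)\frac{n}{2} - 1$, so exactly one color class $c$ has $n/2 - 1$ edges while every other class is a perfect matching of $G$. Every vertex other than $u$ and $v$ has degree $n - 1$ and therefore sees all $n - 1$ colors, hence the two vertices unsaturated by $c$ must be $u$ and $v$. Then $uv$ can be legally added in color $c$, contradicting maximality.

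For the positive part, I would begin with a $1$-factorization of $K_n$, which realizes $m = \binom{n}{2}$, and produce smaller values of $m$ by deleting carefully chosen sub-matchings from one or more color classes. Each deletion must be arranged so that the enlarged set $V_c$ continues to induce a clique in the resulting graph. Splitting into cases according to $n \bmod 4$ and handling the extreme values of $m$ by ad hoc local modifications should cover the entire asserted range. The main obstacle is precisely this construction part: the deletions must satisfy a global invariant across all $n - 1$ color classes simultaneously, and the excluded value $m = \frac{1}{4}n^2 + 1$ for $n \equiv 2 \pmod 4$ reflects a genuine parity obstruction at the sparsest extreme, which I would handle by a short case analysis exploiting the fact that $n/2$ is odd to rule out a maximal edge coloring with exactly that many edges.
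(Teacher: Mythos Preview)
This theorem is not proved in the present paper at all: it is quoted from Meszka and Tyniec~\cite{MT} as background, and the paper's own contributions are Theorems~\ref{thm:even}, \ref{thm:even2}, and~\ref{thm:odd}. So there is no proof here to compare your proposal against.

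That said, evaluating your proposal on its merits: your two non-existence arguments are correct. The clique argument for the lower bound is sound (indeed it yields $2m \ge \delta + \delta^2 + (n-1-\delta)^2$, whose integer minimum is $\tfrac12 n^2 - \tfrac12 n$, giving $m \ge \tfrac14 n^2 - \tfrac14 n$, which is actually a bit sharper than the stated $\tfrac14 n^2 - \tfrac38 n$). The $m=\binom{n}{2}-1$ argument via counting color-class sizes is also fine.

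The genuine gap is the construction side. You describe only an intention (``I would begin with a $1$-factorization \ldots\ and produce smaller values of $m$ by deleting carefully chosen sub-matchings'') but never specify which matchings to delete or verify that all the clique conditions on every $V_c$ are maintained simultaneously over the full range $\tfrac14 n^2 \le m \le \binom{n}{2}$. This is the substantive work in Meszka--Tyniec, and a plan is not a proof. You also misread the role of $m=\tfrac14 n^2+1$ for $n\equiv 2\pmod 4$: Theorem~1 merely \emph{excludes} this value from the positive assertion; it does not claim non-membership. The non-membership $\tfrac14 n^2+1\notin MEC(n)$ is precisely the content of the present paper's Theorem~\ref{thm:even2}, and its proof (see Section~\ref{sec:even}) is considerably more delicate than ``a short case analysis''.
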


\begin{trm}
Let $n$ be an odd number, $n > 10$.
\begin{itemize}
\item If $\frac{1}{4}n^2 + \frac{1}{2}n - \frac{3}{4} \le m \le \frac{1}{2}n^2 - \frac{1}{2}n = \binom{n}{2}$, then $m \in MEC(n)$.
\item If $0 \le m < \frac{1}{4}n^2 - \frac{1}{4}n$, then $m \notin MEC(n)$.
\end{itemize}
\end{trm}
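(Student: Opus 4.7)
My strategy begins with a structural reformulation: since $\chi'(K_n)=n$ for odd $n$, a proper $n$-edge-coloring of $G$ is maximal if and only if, for every color $c$, the set $V_c$ of vertices left unsaturated by the color-$c$ matching induces a clique in $G$. Equivalently, writing $D(x)$ for the set of colors incident to vertex $x$, every non-edge $uv$ of $G$ must satisfy $D(u)\cup D(v)=\{1,\dots,n\}$, which in particular forces $\deg(u)+\deg(v)\ge n$.

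For the non-existence part, I would take a vertex $v$ of minimum degree $\delta$ in a putative maximal coloring of a graph $G$ with $m$ edges. The above non-edge inequality forces every non-neighbor of $v$ to have degree at least $n-\delta$, while every neighbor has degree at least $\delta$ by minimality. Summing over all vertices gives
\begin{equation*}
2m \ge \delta + \delta\cdot\delta + (n-1-\delta)(n-\delta) = 2\delta^2 + 2(1-n)\delta + n(n-1).
\end{equation*}
As a convex quadratic in $\delta$, the right-hand side is minimized at $\delta=(n-1)/2$, an integer since $n$ is odd, with minimum value $(n^2-1)/2$. Hence $m\ge (n^2-1)/4$, which is strictly greater than $\frac{n(n-1)}{4}=\frac{1}{4}n^2-\frac{1}{4}n$, establishing the claimed non-existence range.

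For the existence part, the trivial boundary $m=\binom{n}{2}$ is realized by $G=K_n$ with any proper $n$-edge-coloring, since the maximality condition is then vacuous. For smaller $m$, I would build constructions starting from a proper $(n-2)$-edge-coloring of $K_{n-1}$ on vertices $\{1,\dots,n-1\}$ (available because $n-1$ is even), attach the remaining vertex $n$ via a carefully chosen set of edges colored with the two hitherto unused colors, and, when more edges at $n$ are desired, recolor a small controlled substructure of $K_{n-1}$ to free additional colors. Complementary constructions should arise from $K_n$ by removing a near-perfect matching of one color and rearranging its endpoints among the other cliques $V_c$ so as to preserve the clique condition. The principal obstacle is to orchestrate these modifications so that every integer $m$ in $\left[\frac{(n-1)(n+3)}{4},\binom{n}{2}\right]$ is realized without gaps: a single local change typically enlarges two sets $V_c$ at once, so the constructions must be organized as block-type gadgets---a large clique together with a carefully chosen matching-like component---whose parameters can be tuned to cover the whole interval.
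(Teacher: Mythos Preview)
This theorem is not proved in the present paper: it is quoted as a prior result of Meszka and Tyniec~\cite{MT}, and the paper's own contribution in the odd case (Theorem~\ref{thm:odd}) is to \emph{close the gap} between the two bullets by showing $m\ge\frac14 n^2+\frac12 n-\frac34$ for every $m\in MEC(n)$. So there is no in-paper proof to compare your proposal against.

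On the merits of the proposal itself:

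\textbf{Non-existence.} Your argument is correct and pleasantly short. Pivoting on a minimum-degree vertex $v$ and using $\deg(u)+\deg(v)\ge n$ only for the non-neighbors of $v$ gives
\[
2m \;\ge\; \delta + \delta^2 + (n-1-\delta)(n-\delta) \;=\; 2\delta^2 + 2(1-n)\delta + n(n-1),
\]
minimized at $\delta=(n-1)/2$ with value $(n^2-1)/2$. Hence $m\ge(n^2-1)/4$, which is even a hair stronger than the quoted Meszka--Tyniec bound $m\ge n(n-1)/4$ (though still well short of the sharp Theorem~\ref{thm:odd}). This half is complete.

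\textbf{Existence.} Here there is a genuine gap: you have a plan but not a proof. Phrases such as ``I would build,'' ``complementary constructions should arise,'' and ``must be organized as block-type gadgets'' announce an approach without executing it. The specific idea you sketch---start from an $(n-2)$-colored $K_{n-1}$, attach vertex $n$ with the two unused colors, then ``recolor a small controlled substructure'' to reach further values of $m$---does not, as stated, deliver a single verified maximal coloring, let alone a parametrized family hitting every integer $m$ in $\bigl[\tfrac{(n-1)(n+3)}{4},\binom{n}{2}\bigr]$. The Meszka--Tyniec argument for this range is genuinely constructive and case-intensive; until you actually exhibit the colorings and check the clique condition on every $V_c$, this half remains unproved.
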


Moreover, by computer analysis $MEC(n)$ for $3 \le n \le 10$ were completely determined.

From the above theorems it is clearly seen that the problem has not been solved for $\frac{1}{4}n^2 - \frac{3}{8}n < m \le \frac{1}{4}n^2 - 1$ when $n \ge 12$ and even, for $m = \frac{1}{4}n^2+1$ when $n \ge 14$, $n \equiv 2$ (mod $4$) and for $\frac{1}{4}n^2 - \frac{1}{4}n \le m \le \frac{1}{4}n^2 + \frac{1}{2}n - \frac{7}{4}$ when $n \ge 11$ and odd.

In this paper we solve this problem completely for all $n$ by proving the following three theorems.

\begin{trm}\label{thm:even}
If $n\ge4$ is even and $m \in MEC(n)$, then $m \ge \frac{1}{4}n^2$.
\end{trm}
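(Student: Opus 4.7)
I would argue by contradiction. Suppose $G$ has a maximal edge coloring with $n-1$ colors and $m:=|E(G)|<n^2/4$, and derive a contradiction.

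\emph{Setup.} For each vertex $v$ write $C(v)$ for the set of colors appearing at $v$ and set $m(v):=n-1-d(v)=|\{1,\dots,n-1\}\setminus C(v)|$. For each color $c$ let $S_c$ be the set of vertices missing $c$. Two consequences of maximality are central: (i) $m(u)+m(v)\le n-1$ for every non-edge $uv$ (equivalently $\overline C(u)\cap\overline C(v)=\emptyset$); and (ii) each $S_c$ is a clique in $G$, of even size $n-2|M_c|$.

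\emph{A vertex with high defect exists.} From $m<n^2/4$ one gets $\sum_v m(v)=n(n-1)-2m>n(n-2)/2$, so some $v$ satisfies $m(v)\ge n/2$. Let $A=\{v:m(v)\ge n/2\}$ and $a=|A|$. By (i), any two vertices of $A$ must be adjacent (their $m$-values sum to $\ge n>n-1$), so $A$ is a clique; combined with $d(u)\le n/2-1$ for $u\in A$ this gives $a\le n/2$.

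\emph{Case $a\le 1$: degree count with parity.} If $a=0$, every vertex has $d\ge n/2$ and $2m\ge n^2/2$. If $A=\{v_0\}$ with $m(v_0)=M$, then every $w\ne v_0$ lies in $V\setminus A$ so $d(w)\ge n/2$, and non-neighbors of $v_0$ satisfy the stronger bound $d(w)\ge M$ coming from (i). Summing,
\[
2m\;\ge\;(n-1-M)+(n-1-M)\cdot\tfrac{n}{2}+M^{2}\;=\;(n-1-M)\bigl(1+\tfrac{n}{2}\bigr)+M^{2},
\]
which, for $M\in\{n/2,\dots,n-1\}$, is minimized at $M=n/2$ with value $n^{2}/2-1$. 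Since $n$ is even $n^{2}/2$ is even, so parity of $2m$ forces $2m\ge n^{2}/2$; contradiction.

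\emph{Case $a\ge 2$: structural argument using the cliques $S_c$.} Pure degree-counting here only yields $m\ge\binom{a}{2}+(n-a)n/4$, which is $\Theta(n)$ short of $n^{2}/4$, so I would exploit condition (ii). Pick $u_1,u_2\in A$; they are adjacent, and their non-neighbor sets lie in $B:=V\setminus A$, each of size $\ge n/2$, so inclusion-exclusion gives $|T|\ge a\ge 2$ for $T:=N^c(u_1)\cap N^c(u_2)$. For each $w\in T$ the two non-edge inequalities from (i) force $\overline C(w)\subseteq C(u_1)\cap C(u_2)$, hence $m(w)\le\min(d(u_1),d(u_2))\le n/2-1$. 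The plan is to show that for $m$ close to $n^{2}/4$ these inequalities must be essentially tight, forcing $C(u_1)=C(u_2)$ and $\overline C(w)=C(u_1)$ for every $w\in T$; iterating this forcing across $A$ and $B$ collapses the missing-color set of every $B$-vertex onto a single $(n/2-1)$-element set $\Gamma$, so each $c\in\Gamma$ satisfies $S_c\supseteq B$. By (ii) this forces $B$ to be a clique in $G$, and a direct edge count shows $m\ge\binom{a}{2}+\binom{n-a}{2}+e(A,B)\ge n^{2}/4$, the required contradiction.

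\emph{Principal difficulty.} The hard step is clearly the case $a\ge 2$: the pairwise inequality (i) alone is too weak and degree-counting falls $\Theta(n)$ short, so the contradiction must come from the much more restrictive clique condition (ii). I expect the argument to require careful tracking of how the missing-color sets of non-neighbors of several $A$-vertices must overlap, and a quantitative version of the ``collapse onto $\Gamma$'' step above that works even when $m$ is only slightly below $n^{2}/4$.
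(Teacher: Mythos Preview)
Your cases $a=0$ and $a=1$ are correct, but the case $a\ge 2$ is not a proof --- it is a plan, and the plan as written does not close. Two concrete problems. First, the ``collapse onto $\Gamma$'' step is entirely unjustified: you assert that near-tightness in the inequalities forces $C(u_1)=C(u_2)$ and $\overline C(w)=C(u_1)$ for all $w\in T$, but you give no mechanism for this, and there is no obvious slack-counting that produces it. Second, even granting that $B$ becomes a clique, your final inequality $\binom{a}{2}+\binom{n-a}{2}+e(A,B)\ge n^2/4$ is false as stated: at $a=n/2$ the two binomials sum to $n^2/4-n/2$, and under your own hypotheses every vertex of $A$ has degree $\le n/2-1$ and already $n/2-1$ neighbours inside $A$, so $e(A,B)=0$. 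The resulting two-clique configuration has $m=n^2/4-n/2<n^2/4$; it is of course not maximally coloured, but ruling it out requires a genuine colouring argument that your edge count does not supply.

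The paper sidesteps all of this by a different reduction. Instead of splitting on $|A|$, it first proves (by summing $d(u)+d(v)\ge n$ over all non-edges and using QM--AM on $\sum d(u)^2$) that if \emph{every} non-edge satisfies $d(u)+d(v)\ge n$ then already $m\ge n^2/4$. So one may assume there is a non-edge $uv$ with $d(u)+d(v)=n-1$ \emph{exactly}. This equality is the key: it forces the colour sets at $u$ and $v$ to be complementary, hence $u$ and $v$ have exactly one common neighbour, every other vertex is adjacent to exactly one of $u,v$, and a short degree-sum (each vertex on $u$'s side must see all of $u$'s colours, symmetrically for $v$) gives $2m\ge a^2+(n-1-a)^2+(n-1)\ge n^2/2$. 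Your case split never isolates such a tight pair, which is why you are left fighting for the structure by hand.
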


\begin{trm}\label{thm:even2}
If $n\ge10$ and $n \equiv 2$ (mod $4$), then $\frac{1}{4}n^2 + 1 \notin MEC(n)$.
\end{trm}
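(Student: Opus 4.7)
The plan is to argue by contradiction: suppose that some graph $G$ on $n = 4k+2 \geq 10$ vertices with $m = \tfrac{1}{4} n^2 + 1$ edges admits a maximal edge coloring $\varphi$ using the $n-1$ colors. Let $M_i$ denote the matching of color $c_i$, let $U_i = V(G) \setminus V(M_i)$ be its set of unsaturated vertices, and write $d_i = n/2 - |M_i|$ so that $|U_i| = 2d_i$. The key reformulation of maximality is that each $U_i$ must induce a clique in $G$: if $v,w \in U_i$ and $vw \notin E(G)$ we could color $vw$ with $c_i$ while keeping properness. Counting gives $\sum_i d_i = \binom{n}{2} - m = 4k^2 + 2k - 1$.

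The first real step is to extract an Ore-type inequality. For any non-edge $vw$ of $G$ the color sets $\{i : v \in U_i\}$ and $\{i : w \in U_i\}$ are disjoint (else $U_i \ni v,w$ would force $vw \in E(G)$), so $(n-1-\deg_G(v)) + (n-1-\deg_G(w)) \leq n-1$, giving $\deg_G(v) + \deg_G(w) \geq n-1$ for every non-edge. Summing over non-edges and using $\sum_v \deg_G(v) = 2m$ yields $\sum_v \deg_G(v)^2 \leq (n-1)(3m - \binom{n}{2})$; combined with Cauchy--Schwarz this pins the degree sequence of $G$ into a narrow window around $n/2$. Coupling this with the local clique constraint $|U_i| \leq \deg_G(v) + 1$ for every $v \in U_i$ bounds the individual sizes $d_i$ and restricts the overlap pattern of the cliques $\{U_i\}$.

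With structural control in hand, the heart of the argument is a parity obstruction that uses $n \equiv 2 \pmod 4$ essentially. Since $n/2 = 2k+1$ is odd, each $|M_i| = (2k+1) - d_i$ has parity opposite to $d_i$, and I would try to extend each $M_i$ to a perfect matching $F_i$ of $K_n$ by picking a perfect matching $\pi_i$ on the even clique $U_i$ (which lies entirely in $E(G)$, since $U_i$ is a clique of $G$). The total count $\sum_i |F_i| = (n-1) \cdot n/2 = \binom{n}{2}$, viewed as a multiset on $E(G)$, forces each edge of $G$ to be covered on average $\binom{n}{2}/m \approx 2$ times; counting this multicovering modulo 2, once globally and once using the known parities of $|M_i|$ and the clique sizes $|U_i|$, produces a discrepancy that is sharp only when $n \equiv 2 \pmod 4$ and $m = \tfrac14 n^2 + 1$. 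The same count is consistent when $n \equiv 0 \pmod 4$ (where $n/2$ is even and the parities rearrange) or when $m = \tfrac14 n^2$ (where $\sum d_i$ shifts by one).

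The main obstacle will be the last step: isolating a parity or divisibility invariant sharp enough to forbid $m = \tfrac14 n^2 + 1$ while still permitting the achievable value $m = \tfrac14 n^2$, and that genuinely depends on $n \bmod 4$. Since the forbidden and permitted values differ by just one edge, a purely global count (such as $\sum d_i \pmod 2$ or $m \pmod 2$) is automatically consistent, and one must combine the forced degree sequence, the clique-overlap structure, and the parity of $n/2$ simultaneously. I expect the delicate combinatorial bookkeeping of how the matchings $M_i$ coexist edge-disjointly with the extending matchings $\pi_i$ inside the tight edge budget of $G$ to be where the proof ultimately lives.
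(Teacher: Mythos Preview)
Your initial setup is sound and matches the paper: the clique property of each unsaturated set $U_i$, the Ore bound $\deg(v)+\deg(w)\ge n-1$ on non-edges, and the degree-squared inequality via summing over non-edges are exactly the tools the paper uses to get started. But the proposal has a genuine gap at precisely the point you flag as ``the main obstacle.'' The parity scheme you outline---extend each $M_i$ to a perfect matching $F_i$ by a matching $\pi_i$ inside the clique $U_i$ and then count mod $2$---does not produce any constraint: as you compute, $\sum_i|F_i|=\binom{n}{2}=m+\sum_i d_i$ is an identity, and since the $\pi_i$ are non-canonical choices, any invariant you extract must either be independent of those choices or come with an argument that no choice succeeds. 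You never specify such an invariant, and I do not see one along these lines; the sentence ``counting this multicovering modulo $2$ \ldots\ produces a discrepancy'' is an assertion, not an argument. So as it stands the proposal is a plan whose decisive step is missing.

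The paper's proof takes a quite different route. It splits into two cases: either some non-edge has degree sum exactly $n-1$, or every non-edge has degree sum at least $n$. In the first case it pins down the structure almost completely (the two vertices have a unique common neighbour $w$, the remaining vertices split into cliques $L$ and $R$ of sizes $n/2$ and $n/2-1$, the edges inside $L$ use only colours from $A$, etc.), and then derives a contradiction by tracking which colours the special vertex $w$ can and cannot see. The congruence $n\equiv 2\pmod 4$ enters here not as a global parity count but locally: it forces $|R|=n/2-1$ to be even, so the complete graph on $R$ contains perfect matchings in at least two colours of $A'$, one of which must be invisible to $L\cup\{w\}$. In the second case the degree-squared bound forces a colour seen by exactly $n/2-1$ vertices, hence a clique of size $n/2+1$, and a short counting argument on edges leaving that clique finishes. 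None of this is a parity obstruction of the kind you sketch; it is detailed structural bookkeeping, and you would need to replace your unspecified last step with something of comparable precision.
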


\begin{trm}\label{thm:odd}
If $n\ge9$ is odd and $m \in MEC(n)$, then $m \ge \frac{1}{4}n^2 + \frac{1}{2}n - \frac{3}{4}$.
\end{trm}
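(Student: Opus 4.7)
The plan is to prove the equivalent statement $|E(\bar G)| \le (n-1)(n-3)/4$ (since $m + |E(\bar G)| = \binom{n}{2}$). Fix a maximal edge coloring of $G$, let $M_i$ be the color-$i$ matching, and set $U_i = V(G) \setminus V(M_i)$; since $n$ is odd, $|U_i| = n - 2|M_i|$ is odd and positive. For each vertex $v$, let $M(v) = \{i : v \in U_i\}$, so $|M(v)| = n - d_G(v)$. The maximality condition is equivalent to $M(u) \cap M(v) = \emptyset$ for every non-edge $uv$---otherwise a color in $M(u) \cap M(v)$ could be used to extend the coloring---and dually every $U_i$ is a clique of $G$.

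From these reformulations, three usable inequalities follow. First, $d_{\bar G}(u) + d_{\bar G}(v) \le n - 2$ for each edge of $\bar G$, and summing over $E(\bar G)$ combined with Cauchy--Schwarz yields the baseline $e := |E(\bar G)| \le n(n-2)/4$, which falls short of the target $(n-1)(n-3)/4$ by exactly $(2n-3)/4$. Second, the identity $\sum_i |U_i| = n + 2e$ together with $|U_i| \le \omega(G)$ (odd) gives $e \le n(\omega(G) - 1)/2$. Third, for every independent set $I$ of $G$ the sets $\{M(v)\}_{v \in I}$ are pairwise disjoint subsets of $[n]$, so $\sum_{v \in I}(1 + d_{\bar G}(v)) \le n$; taking $I$ a maximum independent set and using $d_{\bar G}(v) \ge |I| - 1$ on $I$ yields $\alpha(G)^2 \le n$.

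I would proceed by case analysis on $\omega(G)$. If $\omega(G) \le (n-3)/2$, the clique bound already yields $e \le n(n-5)/4 < (n-1)(n-3)/4$ for $n \ge 9$, finishing the proof. The substantial case is $\omega(G) \ge (n-1)/2$: here one fixes a maximum clique $Q$ of $G$, notes that every $v \in Q$ has $d_G(v) \ge |Q|-1$ (hence $|M(v)| \le n - |Q| + 1$), and every non-edge $uv$ with $v \in Q$ forces $d_G(u) \ge n - d_G(v)$, placing tight constraints on the degree sequence outside $Q$. Coupling this local rigidity with the oddness of each $|U_i|$ and the covering identity $\sum_i |U_i| = n + 2e$ should yield the required improvement over the Cauchy--Schwarz bound.

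The main obstacle is the intermediate regime where $\omega(G)$ sits near $n/2$ and the degree sequence is nearly regular, making all of the above inequalities simultaneously almost tight. Here, recovering the missing $(2n-3)/4$ requires a parity-sensitive argument that exploits, beyond mere integrality, that each $|U_i|$ is odd and that the $1 + d_{\bar G}(v)$ cliques containing each vertex $v$ must fit as cliques of $G$ inside $\{v\} \cup N_G(v)$. A natural line of attack is to examine a clique $U_i$ of maximum size and count how the remaining cliques $U_j$ distribute over $V(G) \setminus U_i$ consistently with the non-edge disjointness $M(u) \cap M(v) = \emptyset$, translating these constraints into a sharper lower bound on $\sum_v d_G(v)^2$ than Cauchy--Schwarz provides.
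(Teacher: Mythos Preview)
Your reformulation is sound: the sets $U_i$ are indeed cliques of $G$, the identity $\sum_i |U_i| = n + 2e$ holds, and the disjointness of the $M(v)$ over an independent set is the right way to encode maximality. The easy case $\omega(G) \le (n-3)/2$ is also correctly dispatched by the clique bound.

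The problem is that the substantive case $\omega(G) \ge (n-1)/2$ is left as a plan rather than a proof. You write that the local rigidity together with oddness of the $|U_i|$ ``should yield the required improvement'' and then explicitly flag the near-regular regime as ``the main obstacle'' with only ``a natural line of attack'' suggested. This is precisely where the entire difficulty of the theorem lies: when $\omega(G) = (n-1)/2$ your clique bound gives only $e \le n(n-3)/4$, which exceeds the target $(n-1)(n-3)/4$ by $(n-3)/4$, and the Cauchy--Schwarz bound is similarly off by $(2n-3)/4$. Neither your third inequality $\alpha(G)^2 \le n$ nor generic parity considerations on the $|U_i|$ recover this deficit; the extremal configurations have $\alpha(G)=2$ and all $|U_i|$ odd already, so those constraints are saturated. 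You have not supplied the argument that closes the gap, and it is not clear from what you have written how one would extract it from your framework.

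The paper takes a different route. It first upgrades the baseline $d(u)+d(v)\ge n$ on non-edges to $d(u)+d(v)\ge n+1$ (your language: every $|M(v)|+|M(w)| \le n-1$ on a non-edge, not just $\le n$) via a structural contradiction, then shows there is no independent triple (indeed no triple with a non-edge and degree sum $\ge 2n-1$). With these in hand, Lemma~\ref{lem:odd_n+2} reduces the problem to analysing a single non-edge $uv$ with $d(u)+d(v)=n+1$ exactly; the bulk of the proof is a careful dissection of that configuration (the common neighbourhood $W$, the partition $L\cup R\cup W$, counting edges between the parts, and finally a colour-frequency argument on the unique common colour of $u$ and $v$). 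None of this structure is visible from the global clique-counting picture you set up, and reproducing it would require essentially rediscovering those claims.
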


In order to prove Theorem~\ref{thm:even} we compute a bound on the number of edges that the sum of degrees of non-adjacent vertices implies, and consider a structure of a graph
that yields a maximal edge-coloring. To prove Theorem~\ref{thm:even2} it is essential to analyze
the structure of a graph and the distribution of colors. These proofs are in Section \ref{sec:even}. The proof of Theorem~\ref{thm:odd} uses similar techniques but is more complicated because of bigger number of cases and more complex structures that can appear. It is considered in Section \ref{sec:odd}. 

\section{The even order case}\label{sec:even}

Let $d(v)$ denote the degree of the vertex $v$.
We say that a vertex \textit{can see} some color $c$ if there exists an edge in color $c$ incident to this vertex. If a graph of order $n$ has a maximal edge coloring, then any not connected vertices $u$ and $v$ must see together all the colors, because otherwise there is a color $c$ that neither $u$ nor $v$ can see, and so we can add an edge $uv$ colored~$c$. For even $n$ we have $\chi'(K_n) = n-1$ and so we have $n-1$ colors and any not connected vertices $u$ and $v$ must satisfy $d(u) + d(v) \ge n-1.$ 

We start with proving the following lemma. 

\begin{lem}\label{lem:even_n}
Let $G$ be a graph of even order $n$ and size $m$. If every two not connected vertices $u, v \in V(G)$ satisfy $d(u) + d(v) \ge n$, then $m \ge \frac{1}{4}n^2$.
\end{lem}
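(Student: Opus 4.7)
The plan is to bound $m$ by extracting a lower bound on $\sum_{v} d(v)$ from the Ore-type hypothesis, via a case split on the minimum degree of $G$. Let $\delta = \min_{v \in V(G)} d(v)$ and pick $v_0 \in V(G)$ with $d(v_0) = \delta$.

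First I would handle the easy case: if $\delta \ge n/2$, then
$$2m = \sum_{v \in V(G)} d(v) \ge n \cdot \tfrac{n}{2} = \tfrac{n^2}{2},$$
giving $m \ge \tfrac{1}{4}n^2$ immediately. So the interesting case is $\delta < n/2$, which for even $n$ means $\delta \le n/2 - 1$.

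In this case, every one of the $n-1-\delta$ vertices outside $N(v_0) \cup \{v_0\}$ is non-adjacent to $v_0$, so by hypothesis each has degree at least $n-\delta$. The $\delta$ neighbors of $v_0$ have degree at least $\delta$. Summing these contributions (and including $d(v_0)=\delta$) yields
$$2m \ge \delta + \delta \cdot \delta + (n-1-\delta)(n-\delta) = 2\delta^2 - 2(n-1)\delta + n(n-1) =: f(\delta).$$
The quadratic $f$ attains its minimum over the reals at $\delta = (n-1)/2$ and is strictly decreasing on $(-\infty,(n-1)/2]$. Since $\delta \le n/2 - 1 < (n-1)/2$, we have $f(\delta) \ge f(n/2 - 1)$, and a direct computation gives $f(n/2-1) = n^2/2$. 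Hence $2m \ge n^2/2$, i.e.\ $m \ge \tfrac{1}{4}n^2$, as required.

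There is no real obstacle here; the argument is essentially a sharp Ore-style degree-sum calculation. The only thing to watch is that the extremal integer value $\delta = n/2 - 1$ lands on the correct side of the vertex of the parabola (which is why evenness of $n$ is used precisely to make the case boundary clean), and that degenerate small values of $\delta$ do not cause problems — indeed $\delta = 0$ is forbidden altogether, since an isolated $v_0$ would force every other vertex to have degree at least $n$, which is impossible.
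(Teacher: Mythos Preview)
Your proof is correct, but it proceeds along a genuinely different line from the paper's. The paper sums the Ore inequality $d(u)+d(v)\ge n$ over all non-edges, obtaining
\[
2(n-1)m-\sum_{u}d(u)^2 \;\ge\; n\Bigl(\tbinom{n}{2}-m\Bigr),
\]
then applies the power-mean inequality $\sum d(u)^2 \ge (2m)^2/n$ to get a quadratic inequality in $m$ whose smaller root is $n^2/4$. Your argument instead fixes a minimum-degree vertex $v_0$, lower-bounds the degrees of its neighbors by $\delta$ and of its non-neighbors by $n-\delta$, and optimizes the resulting quadratic in $\delta$.

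What each buys: your approach is more elementary (no convexity/Cauchy--Schwarz step) and makes the role of the parity of $n$ very explicit via the integer constraint $\delta\le n/2-1$. The paper's averaging approach is more uniform --- the same calculation is reused almost verbatim for the odd-$n$ lemma with threshold $n+2$, and the resulting inequality $\sum d(u)^2 \le \cdots$ is reused structurally later (in the proof of Theorem~\ref{thm:even2}). Interestingly, your min-degree-vertex technique is exactly what the paper itself deploys later in Claim~\ref{cla:odd_2n-1}, so both ideas are in play in the paper, just in different places.
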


\begin{proof}
Let us sum up the inequality $d(u) + d(v) \ge n$ over all $uv \notin E(G)$. On the right-hand side we simply get $n$ times the number of non-edges in the graph~$G$. On the left-hand side each $d(u)$ is counted the number of times that is equal to the non-degree of the vertex $u$, so the left-hand side is equal to 
$$\sum_{u \in V} d(u)(n-1-d(u)) = (n-1)\sum_{u \in V} d(u) - \sum_{u \in V} d^2(u) = 2(n-1)m - \sum_{u \in V} d^2(u).$$

We find a lower bound for the sum of squares of degrees, using the inequality between the quadratic and arithmetic mean:
$$\sum_{u \in V} d^2(u) = n \sqrt{\frac{\sum_{u \in V} d^2(u)}{n}}^2 \ge n \left( \frac{\sum_{u \in V} d(u)}{n} \right)^2 = n  \left(\frac{2m}{n} \right)^2 = \frac{4m^2}{n}.$$
Altogether it gives 
$$2(n-1)m - \frac{4m^2}{n} \ge n\left(\binom{n}{2}-m\right),$$
which evaluates to 
$$8m^2 + 2n(2-3n)m + n^4 - n^3 \le 0.$$

This is a quadratic inequality on variable $m$, and so $m$ is greater than or equal to the smaller root. It implies $m\ge\frac{1}{4}n^2$, as wanted. 
\end{proof}

We are now ready to prove Theorem~\ref{thm:even}.

\begin{proof}[Proof of Theorem~\ref{thm:even}]
Let $G$ be a graph of size $n$ and order $m < \frac{1}{4}n^2$ having a maximal edge coloring. Since $n$ is even, $G$ is colored using a set $C$ of $n-1$ colors. 
From Lemma~\ref{lem:even_n} we can assume that there exist two vertices $u$ and $v$ which are not connected such that $d(u) + d(v) = n-1$. Together they must see all the colors, so if $A \subset C$ is the set of all colors that $u$ can see, then $v$ can see exactly colors in $A' = C \setminus A$. 

Notice that every vertex of the graph $G$ needs to be connected with $u$ or $v$. Indeed, if there exists some vertex $t$ connected neither with $u$ nor with~$v$, then $t$ would have to see all $n-1$ colors, which is not possible as it can be only connected with the remaining $n-3$ vertices. Let us denote by $W$ the set of common neighbors of $u$ and $v$. Then 
$$n-2 = |A| + |A'| - |W| = n-1 - |W|,$$
which means that $u$ and $v$ have exactly one common neighbor,  which will be denoted by $w$. It also means that every vertex in $V(G) \setminus \{u, v, w\}$ is connected exactly with one of the vertices $u$ or $v$. 

Let $L$ be the set of all vertices except $v$ and its neighbors and $R$ the set of all vertices except $u$ and its neighbors. Denoting $a=|A|$ we have $|A'| = n-a-1$, $|L|=a$ and $|R|=n-a-1$. This is depicted on Figure~\ref{fig:even}.

\begin{figure}[ht]
\begin{center}
\includegraphics[scale=1.2]{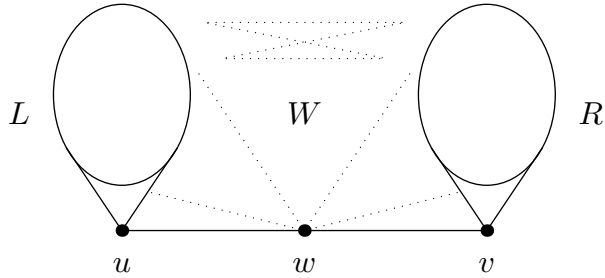}
\end{center}
\caption{A graph of even order $n$ with two vertices $u$ and $v$ having the sum of the degrees equal to $n$. Dotted lines denote the possible edges.}\label{fig:even}
\end{figure}

Let us estimate the sum of the degrees of all the vertices in $G$. Since none of the vertices in $L$ can be connected with $v$, each of them must see all the colors from $A$. This means that it is connected by an edge in color from $A$ with at least one vertex in $R\cup\{w\}$. Similarly, each vertex in $R$ must see all the colors from $A'$ and is connected by an edge in color from $A'$ with $w$ or some vertex in~$L$. Thus the sum of all the degrees is at least 
$$a^2 + (n-a-1)^2 + a + n-a-1 = a^2 + (n-a-1)^2 + n - 1.$$
The last expression is minimized for $a = \frac{n}{2}$ (as $\frac{n-1}{2}$ is not an integer), so the sum of the degrees is at least $\frac{1}{2}n^2$, which contradicts the assumed number of edges. 
\end{proof}

In the proof of Theorem \ref{thm:even2} we also consider similar two cases, but this time we need to use some additional arguments. 

\begin{proof}[Proof of Theorem \ref{thm:even2}]
Let $G$ be a graph of order $n \ge10$ for \hbox{$n \equiv 2$ (mod~4)} with $\frac{1}{4}n^2+1$ edges, that has a maximal edge coloring using a set $C$ of $n-1$ colors. We know that the sum of the degrees of any two non-adjacent vertices is at least $n-1$. 

Firstly, assume that there exists a pair of vertices $u, v \in V$ such that $d(u) + d(v) = n-1$. 
Using observations made in the proof of Theorem~\ref{thm:even}, vertices $u$ and $v$ have exactly one common neighbor $w$, and if $A$ is the set of colors that $u$ can see, then all its neighbors except $w$ need to see all the colors from $A$. Also, the set of colors that $v$ can see is exactly $A' = C \setminus A$, and all its neighbors except $w$ need to see all the colors from $A'$. Let us denote by $L$ the set of all vertices except $v$ and its neighbors and by $R$ the set of all vertices except $u$ and its neighbors. Denoting $a=|A|$ we have $|L|=a$ and $|R|=n-a-1$. This is depicted on Figure \ref{fig:even}.

As in the proof of Theorem~\ref{thm:even}, every vertex in $L$ can have in $L$ at most $a-1$ edges in colors from $A$ and so it is connected by an edge in color from $A$ to at least one vertex in $R\cup \{w\}$. Counting similarly for vertices in $R$, we obtain that the sum of the degrees is at least
\begin{equation}\label{eq:even2degreesum}
a^2 + (n-a-1)^2 + a + n-a-1.
\end{equation}

If $a \ge \frac{1}{2}n+1$ or $a \le \frac{1}{2}n-2$, then this is at least $\frac{1}{2}n^2+4$, contradicting the assumed number of edges, and so $a = \frac{1}{2}n$ or $a = \frac{1}{2}n-1$. From the symmetry, we may assume that $a=\frac{1}{2}n$. For such $a$, the bound in (\ref{eq:even2degreesum}) gives that the sum of the degrees is at least $\frac{1}{2}n^2$.

If there exist vertices $x, y \in L$ that are not connected by an edge in color from $A$, then each of them must have one more neighbor in $R\cup \{w\}$ connected by an edge in color from $A$, which increases the bound in (\ref{eq:even2degreesum}) by $2$. To avoid contradiction with the number of edges, $x$ and $y$ cannot be connected. But then, they need to see all the colors from $A'$, which gives (together with the edge $vw$) more edges in colors from $A'$ outside $R$ then it was counted in $(\ref{eq:even2degreesum})$. Thus, all the vertices in $L$ are connected by edges in colors from $A$. Similarly, all the vertices in $R$ are connected by edges in colors from $A'$. 

This also means that each vertex in $L$ has exactly one neighbor in $R\cup \{w\}$ connected by an edge in color from $A$ and each vertex in $R$ has exactly one neighbor in $L\cup\{w\}$ connected by an edge in color from $A'$. Counting those edges together with edges inside $L$ and inside $R$, we have $\frac{1}{4}n^2$ edges. The only edge not counted yet needs to be between $L$ and $w$ in color from $A'$, or between $R$ and $w$ in color from $A'$. 

Since $|R|=\frac{1}{2}n-1 \ge 4$ is even and $R$ is a complete graph with edges in colors from $A'$, there are at least two colors in $A'$ with $|R|/2$ edges inside~$R$. At least one of those colors cannot be seen by vertices in $L$ and by $w$, and so $L\cup\{w\}$ is a complete graph. If all the edges between $L$ and $w$ are in colors from $A$, then there needs to be an edge in color from $A$ between $R$ and $w$, but then $w$ would have more edges in colors from $A$ than the size of~$A$. Thus, $w$ is connected with $|L|-1$ vertices in $L$ by edges in colors from $A$ and with one vertex in $L$ by an edge in color from $A'$. 

This means that there exists a color $c\in A$ that appears on $(|L|-1)/2$ edges in $L\cup\{w\}$ and all other colors from $A$ appear on $(|L|+1)/2$ edges in $L\cup\{w\}$. Color $c$ is not seen by vertex $w$ and by all vertices in $R$ except one. This means that $w$ is connected with all vertices in $R$ but one. In particular, $w$ sees all the colors from $A'$, which contradicts the statement proven in the previous paragraph that there is a color in $A'$ not seen by $w$.  

Let us consider the second possible case, i.e., when the sum of the degrees of any two non-adjacent vertices is at least $n$. Summing up this assumption over all pairs of non-edges and reducing as in the proof of Lemma \ref{lem:even_n}, we get
$$ 2(n-1)m - \sum_{u\in V} d^2(u) \ge n\left(\binom{n}{2}-m\right).$$
Since $m=\frac{1}{4}n^2+1$, we obtain that
\begin{equation}\label{eq:even2degree}
\sum_{u \in V} d^2(u) \le \frac{1}{4}n^3 + 3n - 2.
\end{equation}

If there exists a color seen by at most $\frac{1}{2}n - 3$ vertices, then all the remaining vertices form a clique. It means that there are at least $\frac{1}{2}n + 3$ vertices that have degree at least $\frac{1}{2}n + 2$. Since the sum of the degrees equals to $\frac{1}{2}n^2+2$, the minimum sum of squares of the degrees is obtained when the degrees are distributed as equally as possible. Thus, in the potential minimum (not necessarily achievable), the remaining $\frac{1}{2}n - 3$ vertices have degrees $\frac{1}{2}n - 3$ or $\frac{1}{2}n - 2$, and the sum of squares of the degrees is at least $$\left(\frac{n}{2} + 3\right)\left(\frac{n}{2} + 2\right)^2 + \left(\frac{n}{2} - 13\right)\left(\frac{n}{2} - 2\right)^2 + 10\left(\frac{n}{2} - 3\right)^2 = \frac{1}{4}n^3 + 6n+50,$$ 
which contradicts $(\ref{eq:even2degree})$.

If every color is seen by at least $\frac{1}{2}n+1$ vertices, then we obtain a contradiction with the total number of edges, and so there exists a color that is seen exactly by $\frac{1}{2}n-1$ vertices. It means that there exists a clique $K$ of size $\frac{1}{2}n + 1$.

Notice that if there are at least $\frac{1}{2}n+1$ edges between $K$ and the rest of the graph, then the minimum sum of squares of the degrees is at least $$\left(\frac{n}{2} + 1\right)\left(\frac{n}{2} + 1\right)^2 + \left(\frac{n}{2} - 1\right)\left(\frac{n}{2} - 1\right)^2 = \frac{1}{4}n^3 + 3n,$$ 
which contradicts $(\ref{eq:even2degree})$. Thus, there must be a vertex $v \in K$, which is not connected to any vertex outside $K$. Since $v$ has degree exactly $\frac{1}{2}n$ and any two not connected vertices have the sum of the degrees at least $n$, each vertex outside $K$ has degree at least $\frac{1}{2}n$. It means that each such vertex has at least $2$ neighbors in $K$. This creates $n-2$ edges between $K$ and the rest of the graph, which gives a contradiction. 
\end{proof}

\section{The odd order case}\label{sec:odd}

Now we consider the case when the order of a graph is odd. Recall that in this case the number of colors is equal to the order of the graph. 

We start with a lemma which proof is very similar to the proof of the Lemma~\ref{lem:even_n}.

\begin{lem}\label{lem:odd_n+2}
Let $G$ be a graph of odd order $n$ and size $m$. If every two not connected vertices $u, v \in V(G)$ satisfy $d(u) + d(v) \ge n+2$, then \hbox{$m \ge \frac{1}{4}n^2 + \frac{1}{2}n - \frac{3}{4}$}.
\end{lem}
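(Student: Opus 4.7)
The plan is to mimic the proof of Lemma~\ref{lem:even_n} almost verbatim, replacing the lower bound $n$ on the degree sum by $n+2$ and tracking how this propagates through the resulting quadratic inequality.

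First, I would sum the hypothesis $d(u)+d(v)\ge n+2$ over all non-edges $uv$ of $G$. The right-hand side becomes $(n+2)\left(\binom{n}{2}-m\right)$, while on the left-hand side each $d(u)$ is counted once for every non-neighbor of $u$, yielding
$$\sum_{u \in V} d(u)(n-1-d(u)) = 2(n-1)m - \sum_{u \in V} d^2(u).$$

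Second, I would invoke the quadratic-arithmetic mean inequality exactly as in Lemma~\ref{lem:even_n} to get $\sum_{u\in V} d^2(u) \ge \frac{4m^2}{n}$. Substituting this bound and clearing denominators produces the quadratic inequality
$$8m^2 - 6n^2 m + n^2(n-1)(n+2) \le 0.$$

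Third, I would solve this quadratic in $m$. A direct calculation shows that its discriminant equals $4n^2(n-4)^2$, a perfect square, so the two roots are $\frac{n(n+2)}{4}$ and $\binom{n}{2}$. Since the parabola opens upward, $m$ must lie between the two roots, and in particular $m \ge \frac{n(n+2)}{4} = \frac{1}{4}n^2 + \frac{1}{2}n$, which is strictly larger than the claimed bound $\frac{1}{4}n^2 + \frac{1}{2}n - \frac{3}{4}$.

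There is no real obstacle here: the argument is purely mechanical and parallels the even case. The only point that deserves a brief verification is that the discriminant factors cleanly as $4n^2(n-4)^2$, which yields the tidy closed form for the two roots. That the derived bound is slightly stronger than the statement of the lemma is harmless and presumably reflects the fact that $\frac{n(n+2)}{4}$ is not an integer for odd $n$, so the authors phrased the lemma in a form convenient for the applications in Section~\ref{sec:odd}.
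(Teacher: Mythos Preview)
Your proof is correct and follows exactly the same route as the paper: sum the degree condition over all non-edges, bound $\sum d^2(u)$ by the QM--AM inequality, and read off the smaller root of the resulting quadratic $8m^2 - 6mn^2 + n^4+n^3-2n^2 \le 0$. Your explicit factorization of the discriminant and identification of the roots $\frac{n(n+2)}{4}$ and $\binom{n}{2}$ is a bit more detailed than the paper, which simply asserts the conclusion; your remark that the actual root $\frac{n(n+2)}{4}$ slightly exceeds the stated bound $\frac{1}{4}n^2+\frac{1}{2}n-\frac{3}{4}$ is also correct and harmless.
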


\begin{proof}
Let us sum up the inequality $d(u) + d(v) \ge n+2$ over all $uv \notin E(G)$. Estimating the left-hand side exactly the same way as it was done in the proof of Lemma \ref{lem:even_n} we obtain that it is at most $2m(n-1) - \frac{4m^2}{n}$.
The right-hand side is $n+2$ times the number of non-edges, thus we obtain the inequality
$$2m(n-1) - \frac{4m^2}{n} \ge (n+2)\left(\binom{n}{2} - m\right),$$ 
which can be written as
$$8m^2 - 6mn^2 + n^4+n^3 - 2n^2 \le 0.$$

As this is a quadratic inequality on variable $m$, it implies that $m$ is greater than or equal to the smaller root. Hence we obtain
$m \ge \frac{1}{4}n^2 + \frac{1}{2}n - \frac{3}{4},$
as desired.
\end{proof}

Using this lemma we can prove Theorem~\ref{thm:odd}. 

\begin{proof}[Proof of Theorem~\ref{thm:odd}]
Let $G$ be a graph of an odd order $n$ at least $9$ and size $m < \frac{1}{4}n^2 + \frac{1}{2}n - \frac{3}{4}$ that has a maximal edge coloring and let $C$ be the set of colors ($|C|=n$). The proof consists of a series of claims leading to a contradiction. 

If two vertices $u, v \in V(G)$ are not connected by an edge, they must see all $n$ colors, which means that $d(u) + d(v) \ge n$. We will show that there cannot exist two not connected vertices $u$ and $v$ having $d(u) + d(v) = n$.

\begin{cla}\label{cla:odd_n+1}
If $u,v \in V(G)$ are not connected then $d(u) + d(v) \ge n+1$.
\end{cla}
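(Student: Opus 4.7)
The plan is to assume, toward contradiction, that some non-adjacent pair $u, v$ satisfies $d(u)+d(v)=n$, and derive a contradiction with the hypothesis $m<\frac{1}{4}n^2+\frac{1}{2}n-\frac{3}{4}$.

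First I would mirror the structural setup from the proof of Theorem~\ref{thm:even}. Since $u,v$ together must see all $n$ colors and $d(u)+d(v)=n$, the color set $A$ seen at $u$ and $A'=C\setminus A$ seen at $v$ partition $C$. Any other vertex lies in $N(u)\cup N(v)$, for otherwise it would have to see all $n$ colors while having only $n-3$ potential neighbors. Writing $L=N(u)\setminus N(v)$, $R=N(v)\setminus N(u)$, $W=N(u)\cap N(v)$, the identities $(|L|+|W|)+(|R|+|W|)=n$ and $|L|+|R|+|W|=n-2$ force $|W|=2$; let $W=\{w_1,w_2\}$, $a=|L|$, $b=|R|$, so $a+b=n-4$.

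Next I would derive degree bounds. Each $\ell\in L$, being non-adjacent to $v$, must see all of $A$, giving $d(\ell)\ge a+2$; symmetrically $d(r)\ge b+2$. For each $w_i$, a non-neighbor in $L$ forces $w_i$ to see all of $A'$ (so $d(w_i)\ge b+3$), and a non-neighbor in $R$ forces it to see all of $A$ (so $d(w_i)\ge a+3$); both conditions cannot occur simultaneously, for that would force $d(w_i)\ge n$, contradicting $d(w_i)\le n-1$. Hence $d(w_i)\ge\min(a,b)+3$. The initial degree-sum bound $2m\ge n+a(a+2)+b(b+2)+2(\min(a,b)+3)$, after optimizing over $a+b=n-4$, gives $2m\ge(n^2+3)/2$, short of the required $(n^2+2n-3)/2$ by $n-3$.

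To close the gap, I would analyze the configuration case-by-case in $a$. For small $a$, where $b$ is large, the degree requirement $d(r)\ge b+2$ combined with $r$'s limited neighborhood forces many $W$-$R$ edges and pushes $d(w_i)$ well above $\min(a,b)+3$, so the simple degree-sum already yields a contradiction. For the extremal case $(a,b)=((n-5)/2,(n-3)/2)$ with $d(w_i)=(n+1)/2$, tight color counting at $w_i$ (which sees all of $A$ together with the single $A'$-color of the edge $vw_i$) forces every other edge at $w_i$ to be $A$-colored. The bound $d(r)\ge b+2$ then forces each of the $\ge b-2$ vertices $r\in R$ without a $W$-neighbor to satisfy $\deg_L(r)\ge 2$, so $e_{LR}+e_{RW}\ge 2b=n-3$. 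Since all $W$-$R$ edges are $A$-colored, each of these $n-3$ cross-edges contributes to the excess-degree quantity $s_L+s_R$, where $s_L=\sum_\ell(d(\ell)-(a+2))$ and $s_R=\sum_r(d(r)-(b+2))$; this gives $s_L+s_R\ge n-3$. Substituting into the identity $2m=3n-8+a^2+b^2+s_L+s_R+d(w_1)+d(w_2)$ and using $d(w_1)+d(w_2)\ge n+1$ yields $2m\ge(n^2+2n-3)/2$, the desired contradiction.

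The hard part will be the extremal-case color-tracking: rigorously verifying both that the tight bound $d(w_i)=(n+1)/2$ forces all of $w_i$'s non-$v$ edges to be $A$-colored, and that the resulting cross-edges $e_{LR}+e_{RW}\ge n-3$ contribute fully to $s_L+s_R$ rather than being absorbed by internal edges of $L$ or $R$.
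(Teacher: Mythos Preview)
Your structural setup through $|W|=2$ is fine, but the key claim ``a non-neighbor in $L$ forces $w_i$ to see all of $A'$'' is false. If $\ell\in L$ is not adjacent to $w_i$, then $\ell$ and $w_i$ jointly see all colors; since $\ell$ sees all of $A$, the colors $w_i$ is \emph{forced} to see are precisely the $A'$-colors that $\ell$ does not see---and nothing prevents $\ell$ from seeing some $A'$-colors (i.e.\ $d(\ell)>|A|$). Hence you cannot conclude $d(w_i)\ge b+3$, the symmetric claim for $R$ fails likewise, and the ``both conditions force $d(w_i)\ge n$'' step collapses. Your bound $d(w_i)\ge\min(a,b)+3$ and the extremal-case assertion that $w_i$ ``sees all of $A$ together with the single $A'$-color of the edge $vw_i$'' are therefore unjustified, and the entire closing-the-gap analysis rests on them.

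The paper sidesteps bounding $d(w_i)$ altogether. With $u\in L$ and $v\in R$ (so $|L|=|A|-1$), each $\ell\in L$ has $|A|$ edges in $A$-colors, of which at most $|L|-1=|A|-2$ stay inside $L$; thus at least two go to $R\cup W$, and these cross-edges contribute at both endpoints to the degree sum. Together with the symmetric count for $R$ this already gives $\sum d(v)\ge |A|^2+|A'|^2+n-4\ge\tfrac12 n^2+n-\tfrac72$, only one edge short. That last edge is found by a short parity argument: taking $|L|$ even, $L$ must be an $A$-colored clique (else two vertices need three cross-edges each), and then averaging shows some color $d\in A$ forms a perfect matching on $L$; so neither $w_1$ nor $w_2$ sees $d$, forcing either $w_1w_2\in E(G)$ or a $d$-edge from $W$ to $R$---an uncounted edge. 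The moral is that the cross-edge counting already captures the $W$-contribution; bounding $d(w_i)$ separately, as you attempt, throws away exactly the $n-5$ that leaves you with a large gap to close.
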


\begin{proof}
Assume the contrary. Since the sum of the degrees of $u$ and $v$ equals to the number of colors and together $u$ and $v$ see all the colors, the sets of colors seen by $u$ and $v$ are disjoint. Let $A$ be the set of colors that $u$ can see and $A' = C \setminus A$ be the set of colors that $v$ can see. Every vertex must be connected with $u$ or $v$, because otherwise it would have to see all the colors, which is not possible. Let us denote by $W$ the set of common neighbors of $u$ and $v$. Then $n - 2 = |A| + |A'| - |W| = n - |W|,$
which means that $u$ and $v$ have exactly two common neighbors. 

Let us denote by $L$ the set of all vertices except $v$ and its neighbors and by $R$ the set of all vertices except $u$ and its neighbors. Denoting $a = |A|$ we get $|A'| = n - a$, $|L| = a-1$ and $|R| = n-a-1$. This is depicted on Figure~\ref{fig:odd_n}.

\begin{figure}[ht]
\begin{center}
\includegraphics[scale=1.2]{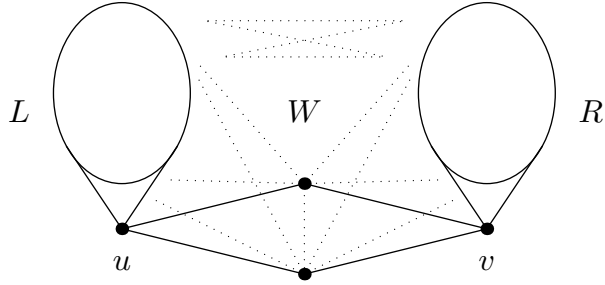}
\end{center}
\caption{A graph of odd order $n$ with two vertices $u$ and $v$ having the sum of the degrees equal to $n$. Dotted lines denote the possible edges.}\label{fig:odd_n}
\end{figure}

Now we estimate the sum of the degrees of all the vertices in $G$. Since each vertex in $L$ is not connected with $v$, it needs to see all the colors from~$A$. And because $|L| = a-1$, each vertex in $L$ is connected by edges in colors from $A$ with at least two vertices from $R \cup W$. Similarly, each vertex in $R$ needs to see all the colors from $A'$ and each vertex from $R$ is connected by edges in colors from $A'$ with at least two vertices from $L \cup W$. Thus, the sum of the degrees of all the vertices is at least
$$a(a-1) + (n-a)(n-a-1) + 2(a-1) + 2(n-a-1) = a^2 + (n-a)^2 + n - 4.$$
The last expression is minimized for $a = \frac{n-1}{2}$ (as $\frac{n}{2}$ is not an integer), so the sum of degrees is at least $\frac{1}{2}n^2 + n - \frac{7}{2}$. To get a contradiction with the assumed number of edges we just need one additional edge that has not been counted yet.

Let us notice that exactly one of the numbers $a - 1$ or $n-a-1$ is even. Thus, without loss of generality, we can assume that $a-1$ is even. If there exist two vertices $x, y \in L$ such that $xy \notin E(G)$ or the edge $xy$ is in color from $A'$, then both $x$ and $y$ need to have at least three edges in colors from~$A$ that connect them with some vertices from $R \cup W$, which increases the sum of the degrees by $2$. So we can assume that $L$ forms a clique with all the edges in colors from $A$. 

The average number of edges in $L$ per color from $A$ equals to 
$$\frac{1}{a} \cdot \binom{a-1}{2} = \frac{a-3}{2} + \frac{1}{a} > \frac{a-3}{2}.$$
Since $\frac{a-3}{2}$ is an integer, there exists a color $d \in A$ with $\frac{a-3}{2}+1 = \frac{a-1}{2}$ edges in  $L$ colored. Thus, there is a perfect matching in color $d$. 

It means that both vertices in $W$ do not see color $d$, which means that they are connected, or at least one of them has color $d$ on an edge connecting it with a vertex in $R$. In both cases we have an additional edge not counted previously, which leads to contradiction. 
\end{proof}

If there are three vertices forming an independent set of size $3$, then each color needs to be seen by at least two of those vertices, so their sum of the degrees is at least $2n$. We show that such case is not possible by proving even a slightly stronger statement. 

\begin{cla}\label{cla:odd_2n-1}
There are no three vertices $v_1$, $v_2$ and $v_3$ with at least one non-edge between them and $d(v_1)+d(v_2)+d(v_3) \ge 2n-1$.
\end{cla}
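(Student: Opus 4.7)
The plan is to argue by contradiction: suppose a triple $v_1, v_2, v_3$ with at least one non-edge and $d(v_1)+d(v_2)+d(v_3) \ge 2n-1$ exists, and derive a contradiction with the global bound $m < \frac{1}{4}n^2 + \frac{1}{2}n - \frac{3}{4}$. I would split the argument into three cases according to the number of non-edges inside the triple.

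If $\{v_1,v_2,v_3\}$ is independent, then by the usual coloring argument every colour must be seen by at least two of $v_1,v_2,v_3$ (otherwise a missing edge inside the triple could be coloured with it), which gives automatically $d(v_1)+d(v_2)+d(v_3) \ge 2n$. Writing $S_i$ for the colour set of $v_i$ and partitioning $C = O_{12} \cup O_{13} \cup O_{23} \cup T$ by which $v_i$'s see each colour, where $T = S_1 \cap S_2 \cap S_3$, all $2n + |T|$ edges incident to the triple land on the remaining $n-3$ outside vertices. For every outside vertex $w$ non-adjacent to some $v_i$, Claim \ref{cla:odd_n+1} forces $d(w) \ge n+1-d(v_i)$, and every colour not seen by $v_i$ must be seen by $w$. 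Summing these forced degrees across the outside vertices and adding the edges incident to the triple, I would push the total edge count past $\frac{1}{4}n^2 + \frac{1}{2}n - \frac{3}{4}$.

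If exactly one edge is missing from the triple, say $v_1v_2 \notin E$, Claim \ref{cla:odd_n+1} yields $d(v_1)+d(v_2) \ge n+1$, and I would reuse the $L, R, W$ setup from that claim's proof for the pair $v_1, v_2$. The extra degree carried by $v_3$, guaranteed by $d(v_1)+d(v_2)+d(v_3) \ge 2n-1$, then produces edges that cannot all be absorbed into the previously established structure, driving the edge count above the permitted bound. The case of exactly two non-edges, say $v_1v_2, v_1v_3 \notin E$ and $v_2v_3 \in E$, is handled analogously: applying Claim \ref{cla:odd_n+1} twice and observing that every colour missed by $v_1$ is seen by both $v_2$ and $v_3$, so that $|S_{v_2} \cap S_{v_3}| \ge n-d(v_1)$, again produces enough extra structure to force a contradictory edge.

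I expect the hardest step to be the independent-triple case, because there the degree-sum inequality automatically holds as $\sum d(v_i) \ge 2n$ and Claim \ref{cla:odd_n+1} alone does not suffice; one must additionally account for the structure of edges among $V \setminus \{v_1,v_2,v_3\}$ forced by the non-adjacency constraints, and for the matching structure of each colour class (each colour $c \in T$ contributes three triple-to-outside edges, each $c \in O_{ij}$ contributes two), in order to squeeze out enough edges to overshoot $\frac{1}{4}n^2 + \frac{1}{2}n - \frac{3}{4}$.
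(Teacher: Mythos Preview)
Your proposal is a plan rather than a proof, and the crucial computations are never carried out. In the ``exactly one non-edge'' case you propose to reuse the $L, R, W$ setup from the proof of Claim~\ref{cla:odd_n+1}, but that structure was derived under the contradictory assumption $d(u)+d(v)=n$; once $d(v_1)+d(v_2) \ge n+1$ the common-neighbour count changes and it is not clear what ``reusing the setup'' would even mean, let alone yield. In the independent-triple case you correctly observe $\sum d(v_i) = 2n+|T|$, but you still need lower bounds on the degrees of outside vertices adjacent to \emph{all three} $v_i$ (for which Claim~\ref{cla:odd_n+1} applied to the triple gives nothing), and you give no indication how to obtain them. All three cases end with an un-quantified phrase like ``push the total edge count past the bound'', which is exactly the step that needs to be done.

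The paper's argument is quite different and sidesteps the case split on the internal structure of the triple entirely. It picks a vertex $u$ of \emph{minimum} degree $\delta$ (not necessarily in $I=\{v_1,v_2,v_3\}$), uses Claim~\ref{cla:odd_n+1} to get $d(w) \ge n+1-\delta$ for every non-neighbour $w$ of $u$, and then bounds the global degree sum against the hypothesis $2m \le \frac{1}{2}n^2+n-\frac{7}{2}$, splitting only on how many vertices of $I$ lie in $N(u)$ versus in the non-neighbour set $S$. The arithmetic closes directly except in the boundary subcase $\delta=\frac{n-1}{2}$ with exactly two vertices of $I$ in $S$, where a short structural observation ($N(u)$ must be a clique, and then edges from $S$ into $N(u)$ give the missing surplus) finishes. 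This minimum-degree-vertex idea is the key ingredient your plan is missing: it converts the local hypothesis on the triple into a global degree bound without any colour-class bookkeeping.
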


\begin{proof}
Assume there are such vertices $v_1$, $v_2$ and $v_3$ forming a set~$I$. 
The sum of the degrees of all the vertices of the graph is strictly smaller than $\frac12 n^2 + n - \frac32$, and since it needs to be even, it is at most $\frac12 n^2 + n - \frac72$.

    If the minimum degree $\delta$ is at least $\frac{n+1}{2}$, then the sum of all the degrees is at least
$$2n-1 + (n-3)\delta \ge \frac12 n^2 + n - \frac52,$$
    contradicting the assumed number of edges. Thus, $\delta \le \frac{n-1}{2}$.

Now, take any vertex $u$ of minimum degree $\delta$ and let $S$ be the set of non-neighbors of $u$. All the vertices in $S$ have degree at least $n+1-\delta$ from Claim~\ref{cla:odd_n+1}. 
If at least two vertices from $I$ are in $N(u)$, then the sum of the degrees is at least
$$(\delta-1)\delta + (n-2-\delta)(n+1-\delta) + 2n-1,$$
which evaluates to
$$2\delta^2-2n\delta+n^2+n-3.$$
    This is minimized for $\delta=\frac{n-1}{2}$ and equals to $\frac12 n^2 + n - \frac52$, contradicting the assumed number of edges.

If all three vertices from $I$ are in $S$, then since between them is at least one non-edge, there are two vertices in $I$ forming an independent set of size $3$ with the vertex $u$ and we can replace the remaining vertex of $I$ with $u$. So, we can assume that exactly two vertices from $I$ are in $S$. The sum of the degrees is at least
$$\delta^2+ (n-3-\delta)(n+1-\delta) + 2n-1 = 2\delta^2-(2n-2)\delta+n^2-4.$$
    If $\delta \le \frac{n-3}{2}$, then this is at least $\frac12 n^2 + n - \frac52$, contradicting the assumed number of edges.
    Thus, $\delta=\frac{n-1}{2}$ and the above lover-bound for the sum of the degrees is equal to $\frac12 n^2 + n - \frac92$.
We only need to show that we can enlarge it by a term at least $2$ that was not counted in the above estimate. 

If there are two not connected vertices in $N(u)$, then their sum of the degrees is at least $n+1$, while we were bounding it by $2\delta=n-1$, which gives the missing term. Thus, each vertex in $N(u)$ is connected with all other vertices in $N(u)$ and with $u$. This gives already $\delta$ edges from each vertex and so each edge between $N(u)$ and $S$ enlarges the computed lower-bound. On the other hand, each vertex in $S$ has degree at least $n+1-\delta = |S| + 2$, and so there are many edges between $N(u)$ and $S$, contradiction.
\end{proof}

Notice now, that if we denote by $W$ the set of common neighbors of any two not connected vertices $u$ and $v$, then we have from Claim~\ref{cla:odd_n+1} and Claim~\ref{cla:odd_2n-1}
$$n-2 = d(u) + d(v) - |W| \ge n+1 - |W|,$$
which means that any two not connected vertices in $G$ have at least $3$ common neighbors. 

From Lemma \ref{lem:odd_n+2} we have that there exist two not connected vertices $u$ and $v$ such that $d(u) + d(v) = n+1$. In particular, it means that $u$ and $v$ have exactly $3$ common neighbors. Since $u$ and $v$ see together all the colors, there is exactly one common color $c$ that is seen by both $u$ and $v$. 
We pick vertices $u$ and $v$ in such a way that their sum of the degrees is exactly $n+1$ and their common color $c$ is seen by the smallest number of vertices among all such pairs. 

Let us denote by $A$ the set of colors seen by $u$ but not by $v$, by $A'$ the set of colors seen by $v$ but not by $u$, and let $a = |A|$. Then $A \cup A' \cup \{c\} = C$ and $|A'| = n-a-1$. 
Let $L$ be the set of all the vertices except $v$ and its neighbors and $R$ be the set of all the vertices except $u$ and its neighbors. Then we have $|L| = a-1$ and $|R| = n-a-2$. Vertices in $L$ are not connected with $v$, and so they see all the colors from $A$ and from Claim~\ref{cla:odd_2n-1} they form a clique. Similarly, the vertices in $R$ see all the colors from~$A'$ and form a clique.

\begin{cla}\label{cla:LRedges}
There are at most $\min\{2n+a-5, 3n-a-6\}$ edges outside $L$ and $R$.
\end{cla}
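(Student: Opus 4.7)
The plan is to notice that the structure already identified in the setup pins down the number of edges outside $L$ and $R$ exactly in terms of $m$, so the claim reduces to a short algebraic verification based on the hypothesis $m<\frac{1}{4}n^2+\frac{1}{2}n-\frac{3}{4}$.

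First, I would record that, since $L$ is a clique of size $a-1$ and $R$ is a clique of size $n-a-2$, the number of edges inside $L$ is $\binom{a-1}{2}$, the number inside $R$ is $\binom{n-a-2}{2}$, and consequently the number $E$ of edges outside $L$ and $R$ is
\[
E \;=\; m - \binom{a-1}{2} - \binom{n-a-2}{2}.
\]

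Second, I would turn the upper bound on $m$ into an upper bound on $E$. The fraction $\frac{n^2+2n-3}{4}=\frac{(n-1)(n+3)}{4}$ is an integer for odd $n$, so the strict inequality $m<\frac{n^2+2n-3}{4}$ together with $m\in\mathbb{Z}$ yields $m\le\frac{n^2+2n-7}{4}$, and therefore
\[
E \;\le\; \frac{n^2+2n-7}{4} - \binom{a-1}{2} - \binom{n-a-2}{2}.
\]

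Third, it suffices to verify that this expression is at most both $2n+a-5$ and $3n-a-6$. Clearing denominators, the inequality $E\le 2n+a-5$ reduces to
\[
4a^2-4a(n-2)+(n-1)(n-3)\;\ge\;0,
\]
which factors as $(2a-(n-1))(2a-(n-3))\ge 0$ and is automatic because $(n-3)/2$ and $(n-1)/2$ are consecutive integers when $n$ is odd, so no integer $a$ lies strictly between them. The inequality $E\le 3n-a-6$ reduces similarly to $(2a-n)^2\ge 1$, which holds because $2a-n$ is an odd, hence nonzero, integer.

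The main (and essentially only) obstacle is the algebraic verification in the third step, but both reductions admit clean factorizations that make the check essentially immediate.
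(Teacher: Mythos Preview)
Your proof is correct and is essentially the contrapositive of the paper's argument: the paper assumes there are more than $2n+a-5$ (respectively $3n-a-6$) edges outside $L$ and $R$, adds the $\binom{a-1}{2}+\binom{n-a-2}{2}$ edges inside, and minimizes the resulting quadratic in $a$ over integers to get at least $\frac14 n^2+\frac12 n-\frac34$ edges, a contradiction. Your version runs the same computation forward from $m\le\frac{n^2+2n-7}{4}$, and your factorizations $(2a-(n-1))(2a-(n-3))\ge 0$ and $(2a-n)^2\ge 1$ are exactly the integer-minimization step phrased differently, so the two arguments coincide.
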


\begin{proof}
    Assume the contrary. Together with $\binom{a-1}{2} + \binom{n-a-2}{2}$ edges inside $L$ and inside $R$ we obtain at least $a^2 - (n-2)a + \frac{1}{2}n^2 - \frac{1}{2}n$ and $a^2 - an + \frac{1}{2}n^2 + \frac{1}{2}n - 1$ edges, respectively. Each bound is minimized for $a = \frac{n-1}{2}$ and equals to $\frac14 n^2 + \frac12 n - \frac34$, contradicting the desired number of edges.
\end{proof}

\begin{cla}\label{cla:Wedges}
There are at most $2|L|+1$ edges between $L$ and $W$, and at most $2|R|+1$ edges between $R$ and $W$. 
\end{cla}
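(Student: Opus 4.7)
The plan is to prove the first inequality $e(L,W)\le 2|L|+1$; the second follows by the analogous argument with the roles of $u$ and $v$ (hence of $L$ and $R$) interchanged throughout.

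Since each $w\in W$ is a common neighbour of $u$ and $v$, the vertex $u$ is adjacent to all three vertices of $W$, contributing exactly three edges to $e(L,W)$. Writing $X=L\setminus\{u\}$, which has size $|L|-1$, the claim is therefore equivalent to $e(X,W)\le 2|X|$. I argue by contradiction and suppose $e(X,W)\ge 2|X|+1$. Since $|N(x)\cap W|\in\{0,1,2,3\}$ for every $x\in X$, a simple pigeonhole count forces the existence of a vertex $x^{\ast}\in X$ adjacent to all three vertices of $W$.

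The key tool is a double count via the colour-matchings of colours in $A$. Because every vertex of $L$ is non-adjacent to $v$, it must see every colour of $A$, and hence for each $\alpha\in A$ the matching $E_\alpha$ covers the whole of $L$. Writing $j_\alpha$ for the number of $\alpha$-coloured edges inside $L$, one gets $2j_\alpha+e_\alpha^{L,W}+e_\alpha^{L,R}=|L|$, and summing over $\alpha\in A$,
\[
e_A^{L,W}+e_A^{L,R}=a|L|-2\,e_A^{L,L}\ \ge\ 2|L|,
\]
with equality exactly when every edge inside $L$ is $A$-coloured. The analogous computation with the colour set $A'$ and the part $R$ in place of $L$ gives $e_{A'}^{R,W}+e_{A'}^{R,L}\ge 2|R|$. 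These two lower bounds, combined with the upper bound of Claim~\ref{cla:LRedges} on the number of edges with an endpoint outside $L\cup R$, pin down the edge counts between the parts $L,R,W$ rather tightly.

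To close the argument I feed these estimates together with the degree constraints $d(w)\le n-3$ from Claim~\ref{cla:odd_2n-1} (applied to each triple $\{u,v,w\}$ with $w\in W$) and $d(w_i)+d(w_j)\ge n+1$ from Claim~\ref{cla:odd_n+1} (applied to each non-adjacent pair $w_i,w_j\in W$). The extra colour pressure at $W$ coming from $x^{\ast}$—the six edges $uw_i, x^{\ast}w_i$ use six pairwise distinct colours, distinct at $u$, at $x^{\ast}$ and at each $w_i$—limits the colours available to the remaining vertices of $X$ for satisfying their requirement of seeing every $\alpha\in A$, and hence forces further edges than the matching count allows. Splitting on the value of $e^{W}\in\{0,1,2,3\}$, the total number $m$ of edges is pushed above the threshold $\tfrac14 n^2+\tfrac12 n-\tfrac34$, contradicting the assumption of Theorem~\ref{thm:odd}.

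The main obstacle is the case analysis on $e^{W}$: when $e^{W}$ is small Claim~\ref{cla:odd_n+1} gives a strong lower bound on $\sum_{w\in W}d(w)=2e^{W}+e(L,W)+e(R,W)$, but as $W$ approaches a clique this lower bound weakens, and one must instead rely on the colour restrictions at $u$, at $x^{\ast}$ and at the three vertices of $W$ (together with the matching lower bounds above) to complete the contradiction.
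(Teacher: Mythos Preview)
Your proposal is not a complete proof. The first half is fine: the reduction to $e(X,W)\le 2|X|$ is correct, the pigeonhole step producing some $x^{\ast}\in X$ adjacent to all of $W$ is correct, and the matching identity giving $e_A^{L,W}+e_A^{L,R}\ge 2|L|$ (with equality iff every edge inside $L$ is $A$-coloured) is correct. But from that point on nothing is actually proved. The sentences ``To close the argument I feed these estimates\ldots'', ``the extra colour pressure at $W$\ldots forces further edges than the matching count allows'', and ``Splitting on the value of $e^{W}\in\{0,1,2,3\}$, the total number $m$ of edges is pushed above the threshold'' describe a \emph{plan}, not an argument: no case is worked out and no inequality is derived. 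In particular, your two matching bounds together only yield that the number of edges outside $L$ and $R$ is at least $2|L|+2|R|=2n-6$, which is far below the upper bound $3n-a-6$ of Claim~\ref{cla:LRedges}; you never show how the extra hypothesis $e(L,W)\ge 2|L|+2$ and the existence of $x^{\ast}$ close that gap. (Also, ``the six edges $uw_i$, $x^{\ast}w_i$ use six pairwise distinct colours'' is false as stated: the colours are distinct at $u$, at $x^{\ast}$, and at each $w_i$, but not globally.)

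The paper's proof is a three-line edge count that bypasses all of this. It uses neither the colour classes in $A$, nor the special vertex $x^{\ast}$, nor any case split on $e^{W}$. The key ingredient, established just before Claim~\ref{cla:LRedges}, is that any two non-adjacent vertices of $G$ have at least three common neighbours; applied to $u$ and each $r\in R$, this forces every vertex of $R$ to send at least three edges into $L\cup W$. One also needs that $W$ contains at least one edge: if $W$ were independent, each colour would be seen by at least two of its three vertices, giving $\sum_{w\in W}d(w)\ge 2n$ and contradicting Claim~\ref{cla:odd_2n-1}. Summing the assumed $2|L|+2=2a$ edges between $L$ and $W$, the $3|R|=3(n-a-2)$ edges from $R$ into $L\cup W$, and the one edge inside $W$ already gives at least $3n-a-5$ edges outside $L$ and $R$, contradicting Claim~\ref{cla:LRedges}. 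You should replace your sketch with this direct count.
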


\begin{proof}
From symmetry, assume that there are at least $2|L|+2=2a$ edges between $L$ and $W$. Every vertex in $R$ is not connected with $u$, and needs to have at least $3$ common neighbors with $u$, so it has at least $3$ edges to $L \cup W$. There is also at least one edge between the vertices in $W$ from Claim~\ref{cla:odd_2n-1}. Altogether we have at least 
$2a + 3(n-a-2) + 1 = 3n-a-5$ edges outside $L$ and $R$, contradicting Claim~\ref{cla:LRedges}.
\end{proof}

If a vertex $w$ in $W$ has a non-neighbor $x$ in $L$ and a non-neighbor $y$ in $R$, then $w$ and $x$ must see all the colors from $A'$, and also $w$ and $y$ must see all the colors from $A$. Thus, $w$, $x$ and $y$ see all the colors from $A$ and $A'$ twice. They also must see the color $c$ at least once, and so, their sum of the degrees is at least $2n-1$, contradicting Claim~\ref{cla:odd_2n-1}. Thus, every vertex in $W$ is fully connected to $L$ or $R$. 

If all three vertices in $W$ are fully connected to $L$ or $R$ we get a contradiction with Claim~\ref{cla:Wedges}. So, we can assume that $W$ consists of vertices $\alpha$ and $\beta$ fully connected to $R$ and a vertex $\gamma$ fully connected to $L$. To avoid contradiction with Claim~\ref{cla:Wedges} vertex $\gamma$ is connected with only one vertex in~$R$, with vertex $v$. 

If $\gamma$ is not connected with $\alpha$ and $\beta$, then there exists a color from $A$ that is not seen by $\gamma$, thus this color needs to be seen by all the vertices in $R\setminus\{v\}$ and by $\alpha$ and $\beta$, which means that it is seen by exactly $n-2$ vertices. This is a contradiction with the parity of $n$. So, we can assume that $\gamma$ is connected with $\beta$. 

Every vertex in $L$ has at least $3$ neighbors in $W \cup R$, which together with edges between $W$ and $R$ and the edge $\beta\gamma$ gives $2n+a-5$ edges outside $L$ and $R$. To avoid contradiction with Claim~\ref{cla:LRedges}, every vertex in $L$ needs to have exactly $3$ neighbors in $W \cup R$. Moreover, there are no more edges inside $W$, in particular $\alpha$ and $\beta$ are not connected. 

Every vertex in $L$ needs to be connected with $\alpha$ or $\beta$, as otherwise they crate an independent set of size $3$ contradicting Claim~\ref{cla:odd_2n-1}. Also, every vertex in $L$ except $u$ cannot be connected with both $\alpha$ and $\beta$, because then we have a contradiction with Claim~\ref{cla:Wedges}. So, every vertex in $L\setminus\{u\}$ has exactly one neighbor in $R$. Similarly, every vertex in $R\setminus\{v\}$ has exactly one neighbor in~$L$. Thus, $|L|=|R|=\frac{n-3}{2}$ and there is a perfect matching between the vertices in $L\setminus\{u\}$ and $R\setminus\{v\}$. This is depicted on Figure~\ref{fig:odd_n+1}.

\begin{figure}[ht]
\begin{center}
\includegraphics[scale=1.2]{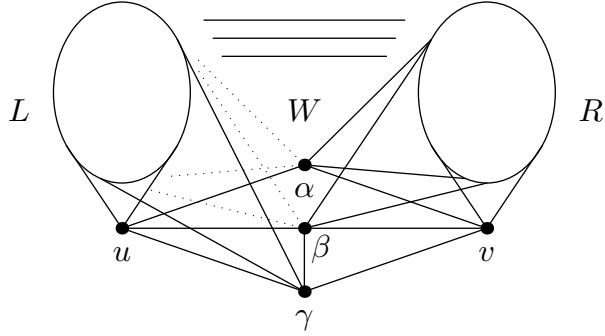}
\end{center}
\caption{A graph of odd order $n$ with two vertices $u$ and $v$ having the sum of the degrees equal to $n+1$. Dotted lines denote the possible edges.}\label{fig:odd_n+1}
\end{figure}

If $\beta$ is fully connected to $L$, then $\alpha$ has only one neighbor in $L$, and $d(\alpha) + d(\gamma) = n$, contradicting Claim~\ref{cla:odd_n+1}. Thus, there exists a vertex $x\in L$ not connected with $\beta$. Let $y$ be the vertex in $R$ connected with $x$.

If $n-1$ colors are seen by the vertices in $W$ at least two times, then we have a contradiction with Claim~\ref{cla:odd_2n-1}. If a color is seen by the vertices in $W$ only once, then it needs to be seen by $\alpha$ and not seen by $\beta$ and $\gamma$, because only $\beta$ and $\gamma$ are connected.  
If any two colors from $A'\cup\{c\}$ are not seen by $\beta$, then the vertex $x$ needs to see those two colors, but it sees also $\frac{n-1}{2}$ colors from $A$, so it has $4$ neighbors outside $L$, which gives a contradiction. Similarly, if any two colors from $A\cup\{c\}$ are not seen by $\gamma$, then $y$ needs to see those two colors, which gives a contradiction the same way. 
Hence, the only way to avoid contradiction, is that there is exactly one color $d \in A\cup\{c\}$ and exactly one color $d' \in A'\cup\{c\}$ that are seen by $\alpha$ and not by $\beta$ and $\gamma$. Moreover, it cannot hold $d=d'=c$. 
Notice that colors $d$ and $d'$ need to be seen by both $x$ and $y$ and any other color in $A \cup A'$ is seen by at least one of them. So, if $d$ or $d'$ is $c$, or if $c$ is seen by $x$ or $y$, then $d(x)+d(y) > n+1$, which gives a contradiction, since $d(x)=d(y)=\frac{n+1}{2}$. Thus, $d \in A$, $d' \in A'$ and $x$ and $y$ do not see color $c$. 

If there is any other vertex in $L$ or $R$ that does not see color $c$, then it needs to be connected with both $x$ and $y$, which gives a contradiction. Thus, color $c$ is seen by exactly $n-3$ vertices. 
Notice that $x$ and $v$ are not connected, $d(x) + d(v) = n+1$ and the only common color of $x$ and $v$ is $d' \in A'$. Recall that the color $c$ was chosen to be the rarest color that appear in such pairs, and so at least $n-3$ vertices see color $d'$. Similarly, by considering vertices $u$ and $y$, we obtain that at least $n-3$ vertices see color $d$. 
Any other color is seen by at least $\frac{n+1}{2}$ vertices, so the total number of edges is at least
$$3\frac{n-3}{2} + (n-3)\frac{n+1}{4} = \frac14 n^2 + n -\frac{21}{4},$$ 
which gives a contradiction with the assumed number of edges for every $n \ge 9$. 
This finishes the proof of Theorem \ref{thm:odd}.
\end{proof}

\end{document}